\newtheorem{theorem}{Theorem}[section]
\newtheorem{lemma}[theorem]{Lemma}
\newtheorem{proposition}[theorem]{Proposition}
\theoremstyle{definition}
\newtheorem{definition}[theorem]{Definition}
\newtheorem{example}[theorem]{Example}
\theoremstyle{remark}
\newcommand{\R}{\mathbb{R}}
\numberwithin{equation}{section}
\begin{document}
\setcounter{page}{1}

\title{ISOMETRIC EMBEDDINGS OF FINITE METRIC TREES INTO $(\mathbb{R}^n,d_{1})$ and $(\mathbb{R}^n,d_{\infty})$}
\author[Asuman  G\"{u}ven  AKSOY, Mehmet Kili\c{c} and Sah\.{i}n Ko\c{c}ak]{Asuman G\"{u}ven AKSOY,$^1*$ Mehmet Kili\c{c}$^1$, \c{S}ah\.{I}n Ko\c{c}ak $^2$}

\address{$^{1^*}$Department of Mathematics, Claremont McKenna College, 850 Columbia Avenue, Claremont, CA  91711, USA.}
\email{\textcolor[rgb]{0.00,0.00,0.84}{aaksoy@cmc.edu}}

\address{$^{1}$ Department of Mathematics, Anadolu University, 26470, Eskisehir Turkey.}
\email{\textcolor[rgb]{0.00,0.00,0.84}{kompaktuzay@gmail.com}}

\address{$^{2}$ Department of Mathematics, Anadolu University, 26470, Eskisehir Turkey.}
\email{\textcolor[rgb]{0.00,0.00,0.84}{skocak@anadolu.edu.tr}}


\subjclass[2010]{Primary 54E35; Secondary 54E45, 54E50, 05C05,\\ 47H09, 51F99.}

\keywords{Metric Trees, Embeddings, Euclidean Spaces with the Maximum Metric}

\begin{abstract}
We investigate isometric embeddings of finite metric trees into $(\mathbb{R}^n,d_{1})$ and $( \mathbb{R}^n, d_{\infty})$.
We prove that a finite metric tree can be isometrically embedded into $(\mathbb{R}^n,d_{1})$ if and only if the number of its leaves is at most $2n$. We show that a finite star tree with at most $2^n$ leaves can be isometrically embedded into $(\mathbb{R}^{n}, d_{\infty})$ and a finite metric tree with more than $2^n$ leaves cannot be isometrically embedded into $(\mathbb{R}^{n}, d_{\infty})$.

We conjecture that an arbitrary finite metric tree with at most
$2^n$ leaves can be isometrically embedded into $(\mathbb{R}^{n},
d_{\infty})$.
\end{abstract}
 \maketitle

\section{Introduction}
A finite metric tree is a finite, connected and positively weighted graph without cycles. The distance between two vertices is given by the total weight of a simple path (which is unique) between these vertices. For two in-between points of the tree one takes the corresponding portions of the edges carrying these points into account. A leaf of a tree is a vertex with degree $1$. Vertices other than the leaves are called interior vertices. A tree with a single interior vertex is called a star (or star tree). We will not allow interior vertices of degree 2 since, from the point of view of metric properties, they can be considered artificial.

Embedding new spaces into more familiar ones is a kind of innate behavior for  mathematicians. For metric spaces, as the first candidate for an ambient space, the Euclidean space $(\mathbb{R}^n,d_{2})$ might come into mind; but it is a complete disappointment. No metric tree (with at least three leaves) can be embedded isometrically into any $(\mathbb{R}^n,d_{2})$. For a star with three leaves, this is almost obvious; and any tree with more than three leaves contains a three-star as a subspace.

There is no help in considering $(\mathbb{R}^n,d_{p})$ with any $1<p<\infty$, because these are also uniquely geodesic spaces and they do not host any tree with at least three leaves either.

At this point we could take refuge in Kuratowski's embedding theorem which states that every metric space $M$ embeds isometrically in the Banach space $L^{\infty} (M)$  of bounded functions on $M$ with sup norm $||f||_{\infty}:= \displaystyle \sup_{x\in M} |f(x)|$ where  $f: M \to \mathbb{R}$. (see \cite{Hei}). But a metric tree has an uncountable number of points so that our ambient space would be too huge and useless. A remedy could arise from considering the leaves of the tree only, since they determine the whole of the metric tree (as a metric space and up to isometry) as their tight span (see  Theorem $8$ in  Dress \cite{dre})
. By this approach we get at least an isometric embedding of the metric tree into $(\mathbb{R}^n, d_{\infty})$ where $n$ is the number of leaves and $d_{\infty}$ denotes the maximum metric $d_{\infty}(x,y)=\displaystyle \max_{1\leq i \leq n} |x_i-y_i|$.

We will show that a metric star tree can be embedded into
$(\mathbb{R}^n,d_{\infty})$ if and only if it has at most $2^n$
leaves and an arbitrary finite metric tree with more than $2^n$
leaves cannot be embedded into $(\mathbb{R}^n,d_{\infty})$. We
guess that an arbitrary finite metric tree with at most $2^n$
leaves can be embedded into $(\mathbb{R}^n,d_{\infty})$ but we are
yet unable to provide a proof for this guess.


The picture for $(\mathbb{R}^n,d_{1})$ as the ambient space is more clear-cut. It was already shown by Evans (\cite{Ev}) that any finite metric tree can be isometrically embedded into $l_1$,
 without explicit bounds for the dimension of the target in terms of the leaf number of the given tree.
 We prove by other, more geometric means that a finite metric tree can be embedded isometrically into $(\mathbb{R}^n,d_{1})$ if and only if the number of its leaves is at most $2n$.

\section{Preliminaries}
For the sake of clarification,  we give in the following the formal definition of a metric tree and mention some of its properties. However, in this paper we will consider a special subfamily of metric trees, namely finite  connected weighted graphs without loops. Our aim is to investigate whether we can isometrically embed these finite metric trees  into $(R^n, d_1)$ or $(R^n, d_{\infty})$.
\begin{definition} \label{def21}
    Let $x, y \in M$, where ($M$, $d$) is a metric space.
    A \emph{geodesic segment from $x$ to $y$} (or a \emph{metric segment},
denoted by $[x,y]$) is the image of an isometric embedding
$\alpha : [a,b]\rightarrow M$ such that $\alpha(a)=x$ and $\alpha(b)=y$.
A metric space is called {\it geodesic} if any two points can be connected
by a metric segment.

     A metric space $(M,d)$, is called a {\it metric tree}  if and only if for all $x,y,z \in
    M$, the following holds:
    \begin{enumerate}
        \item there exists a unique metric segment from $x$ to $y$,
        and
        \item $[x,z] \cap [z,y] = \{z\} \Rightarrow [x,z] \cup [z,y] = [x,y]$.
    \end{enumerate}
\end{definition}

Next we mention some useful properties of metric segments  that we will use. For the proofs of these properties we refer the reader to consult \cite{Blum} and \cite{Ev}.
. 

For $x, y$ in a metric space $M$, write  $xy = d(x,y)$.
For $x, y, z \in M$, we say  $y$ is \emph{between $x$ and $z$}, denoted
    $xyz$, if and only if $xz = xy + yz$.
 \begin{enumerate}
 \item (Transitivity of betweenness \cite{Blum}) Let $M$ be a metric space and let 
$a,b,c,d \in M$. If $abc$ and $acd$, then $abd$ and $bcd$.
 \item  (Three point property,  \cite[Section 3.3.1]{Ev})
Let $x,y,z \in T$ ($T$ is a complete metric tree).
There exists (necessarily unique) $w \in T$ such that $$[x,z] \cap [y,z] = [w,z]\,\,\,
       \mbox{and}\,\,\, [x,y] \cap [w,z] = \{w\} $$
       Consequently, $$[x,y] = [x,w] \cup [w,y],\,\,\,[x,z] = [x,w] \cup [w,z],\,\,\,
       \mbox{and}\,\,\, [y,z] = [y,w] \cup [w,z].$$
\end{enumerate}

Here are two examples of metric trees:
\begin{example}(The Radial Metric)\label{E:radial}\\
    Define $d: \R^2 \times \R^2 \to \R_{\geq 0}$ by:
    \[
        d(x,y) =
            \begin{cases}
            \|x-y\| & \text{if $x = \lambda \, y$ for some $\lambda \in \R$,}\\
            \|x\| + \| y \| & \text{otherwise.}
            \end{cases}
    \]
    It is easy to verify  that  $d$ is in fact a metric and that $(\R^2,d)$ is a metric tree.
\end{example}
\begin{example}
( ``Star Tree'')\label{E:tripod}

 Fix $k \in \mathbb{N}$, and a sequence of positive
numbers $(a_i)_{i=1}^k$, the {\it metric star tree} is defined
as a union of $k$ intervals of lengths $a_1, \ldots, a_k$, emanating
from a common center and equipped with the radial metric. More precisely,
our tree $T$ consists of its center $o$, and the points $(i,t)$, with
$1 \leq i \leq k$ and $0 < t \leq a_i$. The distance $d$ is defined
by setting $d(o, (i,t)) = t$, and
$$
d((i,t),(j,s)) = \left\{ \begin{array}{ll}
   |t-s|   &   i=j   \cr
   t+s     &   i \neq j
\end{array} \right. .
$$
Abusing the notation slightly, we often identify $o$ with $(i,0)$. The leaves of this metric star tree are the points $(i,a_i),\, i=1,\dots,k.$ \end{example}

 In the following we will consider only a very special and simple type of metric trees called finite metric trees or finite simplicial metric trees. They are explained, for example,
  in \cite{papa}, p.43 and p.73 (see also \cite{Ev}, Example 3.16). Topologically they are finite trees in the sense of graph theory: There is a finite set of ``vertices''; between any pair of vertices there is
   at most one ``edge'' and the emerging graph is connected and has no loops. The degree of a vertex is defined in the graph-theoretical sense and a vertex with degree 1 is called a leaf.

 In addition to this graph-theoretical structure, non-negative weights are assigned to the edges. Moreover, to a pair of vertices a distance is assigned by considering the unique simple path (in graph-theoretical sense)
  connecting these vertices and adding up the weights of the edges constituting this path. This distance can be naturally extended to any pair of points of the tree and converts the graph-theoretical tree into a metric space
   which is a metric tree in the sense of Definition \ref{def21}.

 These special metric trees are also called finite metric trees, or finite simplicial metric trees, though they obviously contain a continuum of points as a metric space (except in the trivial case of a singleton).
 We will be concerned with embedding these finite metric trees into $(\mathbb{R}^n,d_1)$ or $(\mathbb{R}^n,d_{\infty})$ where we will try to optimize the dimension $n$ in dependence of the number of the leaves of the tree.

 In general metric trees are more complicated than finite simplicial metric trees.  For further discussion of non-simplicial trees and construction of metric trees related to the asymptotic geometry of hyperbolic metric spaces we refer the reader to \cite{Brid}.
\section{Embedding Star Trees}

As motivational examples we consider first finite metric star trees. They can be viewed  as a union of  $k$ intervals of lengths $a_1,a_2,\ldots,a_k$, emanating from a common center and equipped with the radial metric, as described above (see Fig~\ref{st1}).
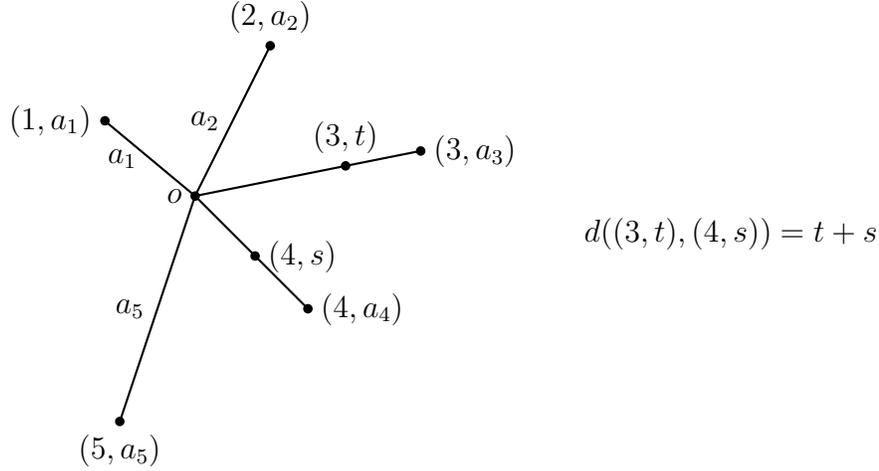
\begin{figure}[h!]
\begin{center}
\begin{pspicture*}(-3,-5)(10,3)
\uput[l](0,0){$o$}
\psline(-1,-3)(0,0)(-1.2,1)
\psline(1,2)(0,0)(3,0.6)
\psline(0,0)(1.5,-1.5)
\psdots(-1,-3)(0,0)(-1.2,1)(1,2)(1.5,-1.5)(3,0.6)(2,0.4)(0.8,-0.8)
\uput[d](-1,-3){$(5,a_5)$}
\uput[l](-1.2,1){$(1,a_1)$}
\uput[u](1,2){$(2,a_2)$}
\uput[r](3,0.6){$(3,a_3)$}
\uput[r](1.5,-1.5){$(4,a_4)$}
\uput[l](-0.5,-1.5){$a_5$}
\uput[l](-0.6,0.5){$a_1$}
\uput[l](0.5,1){$a_2$}
\uput[u](2,0.4){$(3,t)$}
\uput[r](0.8,-0.8){$(4,s)$}
\uput[r](5,-0.5){$d((3,t),(4,s))=t+s$}
\end{pspicture*}
\caption{A metric star tree.}
\label{st1}
\end{center}
\end{figure}

The following two properties give the tight embedding bounds for finite metric star trees.

\begin{proposition} \label {a0}
A metric star tree $(X,d)$ with $k$ leaves can be isometrically embedded into $(\mathbb{R}^n, d_{\infty})$ if and only if $k\leq2^n$.
\end{proposition}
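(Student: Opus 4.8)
The plan is to treat the two directions of the equivalence separately: for sufficiency I will write down an explicit embedding, and for necessity I will extract from any embedding a combinatorial obstruction counting the vertices of a hypercube. Throughout I place the center $o$ at the origin of $\mathbb{R}^n$, which costs nothing since $d_\infty$ is translation invariant, and I write $a_1,\dots,a_k$ for the edge lengths.

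For the ``if'' direction, assume $k\le 2^n$. Since $|\{-1,+1\}^n|=2^n\ge k$, I would pick $k$ distinct sign vectors $v_1,\dots,v_k\in\{-1,+1\}^n$ and define $o\mapsto 0$ and $(i,t)\mapsto t\,v_i$ for $0\le t\le a_i$. Because $\|v_i\|_\infty=1$, distances along a single ray are preserved; and for $i\ne j$ the elementary bound $\|t v_i - s v_j\|_\infty\le t+s$ is attained at any coordinate where $v_i$ and $v_j$ disagree. Such a coordinate exists precisely because $v_i\ne v_j$ and both lie in $\{-1,+1\}^n$, so there the entries are exactly $\pm 1$ and $\mp 1$, giving value $t+s$. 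This reproduces the radial distance between distinct rays, so the map is an isometry.

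For the ``only if'' direction, suppose an isometric embedding $f$ exists and set $p_i=f(\text{leaf }i)$ and $r_i=a_i$, so that $\|p_i\|_\infty=r_i$ while $\|p_i-p_j\|_\infty=r_i+r_j$ for $i\ne j$ (the center is metrically between every pair of leaves). The first computation I would carry out is to show each such pair must be \emph{saturated with opposite signs} in some coordinate: since $|(p_i)_m|\le r_i$ and $|(p_j)_m|\le r_j$, the equality $\|p_i-p_j\|_\infty=r_i+r_j$ can only be realized at a coordinate $m$ with $(p_i)_m=\pm r_i$ and $(p_j)_m=\mp r_j$. I then encode each point by a pattern $\chi_i\in\{-1,0,+1\}^n$ recording the sign in every saturated coordinate and $0$ elsewhere.

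The decisive step, and the one I expect to be the crux, is converting this pairwise ``opposite-coordinate'' condition into the bound $k\le 2^n$. I would attach to each $\chi_i$ the subcube $A_i=\{\varepsilon\in\{-1,+1\}^n : \varepsilon_m=(\chi_i)_m \text{ whenever } (\chi_i)_m\ne 0\}$ of the hypercube. The payoff of this encoding is that $A_i\cap A_j=\emptyset$ holds exactly when some coordinate has $\chi_i,\chi_j$ both nonzero and opposite, which is precisely the condition established above. Hence $A_1,\dots,A_k$ are pairwise disjoint nonempty subsets of the $2^n$-element set $\{-1,+1\}^n$, forcing $k\le 2^n$. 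The subtlety to handle carefully is that $\chi_i$ may contain genuine zeros, so no single sign vector is canonically attached to $p_i$; a naive count over $\{-1,0,+1\}^n$ only gives $3^n$. The face construction is exactly what absorbs this ambiguity, replacing it by a clean disjointness statement over the $2^n$ vertices of the cube.
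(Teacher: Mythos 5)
Your proof is correct, and while your sufficiency argument coincides with the paper's, your necessity argument is packaged genuinely differently. For ($\Leftarrow$) both you and the paper send the $i$-th ray onto $\{t\,v_i : 0\le t\le a_i\}$ for distinct cube vertices $v_i\in\{-1,+1\}^n$ and verify the three distance cases, so there is nothing to compare. For ($\Rightarrow$) the paper argues by contradiction via pigeonhole: if $k>2^n$, two image points $A_i,A_j$ must have all corresponding coordinates of the same sign, and then coordinatewise $|A_i^m-A_j^m|\le\max(|A_i^m|,|A_j^m|)<a_i+a_j$, contradicting $d_\infty(A_i,A_j)=a_i+a_j$. You instead prove the stronger pairwise statement that every pair of images is \emph{oppositely saturated} in some coordinate, and then convert this into a packing bound: the faces of the hypercube cut out by each image's saturated coordinates are nonempty and pairwise disjoint, forcing $k\le 2^n$ directly. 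Both routes rest on the same coordinatewise triangle-inequality estimate, but yours buys two things: it is a direct proof of the bound rather than a contradiction, and it disposes explicitly of the zero-coordinate ambiguity, which the paper's phrase ``all corresponding coordinates of their images have the same sign'' glosses over (a zero coordinate has no sign, and the paper's pigeonhole into $2^n$ classes tacitly needs the convention that zero is compatible with either sign). The paper's argument is shorter; yours records more structure, namely that each leaf image claims its own face of $\{-1,+1\}^n$, which is exactly the refinement needed to get past the naive $3^n$ count of sign patterns with zeros.
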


\begin{proof}
($\Leftarrow$)
First assume $k\leq 2^n$. Note that there are $2^n$ extreme points on the unit ball of the space $(\mathbb{R}^n,d_{\infty})$, which are $\epsilon \in \{-1,1 \}^n$. We will denote these extreme points by $v_1,v_2,\ldots,v_{2^n}$ and define the map $f:X\rightarrow\mathbb({R}^n,d_{\infty})$ by $f(o)=O\in\mathbb{R}^n$ and $f((i,t))= t\cdot v_i$. The map $f$ is then an isometric embedding from $X$ to $(\mathbb{R}^n,d_{\infty})$:
$$d_{\infty}(f(i,t),f(o))=d_{\infty}(t\cdot v_i,O)=t=d((i,t),o),$$
$$d_{\infty}(f(i,t),f(i,s))=d_{\infty}(t\cdot v_i,s\cdot v_i)=|t-s|=d((i,t),(i,s)),$$
$$d_{\infty}(f(i,t),f(j,s))=d_{\infty}(t\cdot v_i,s\cdot v_j)=t+s=d((i,t),(j,s)), \,\,\,\mbox{when}\,\,\,  i\neq j.$$

($\Rightarrow$)
Assume that $k>2^n$ but there exists an isometric embedding $f$ from $X$ to $(\mathbb{R}^n,d_{\infty})$. We can assume that the embedding takes the center of $X$ to the origin of $\mathbb{R}^n$ because translation is an isometry. Then, we can find two points $(i,a_i)$ and $(j,a_j)$ in $X$ with $i\neq j$ such that all corresponding coordinates of their images have the same sign. If their images are $f(i,a_i)=A_i=(A_i^1,A_i^2,\cdots,A_i^n)$ and $f(j,a_j)=A_j=(A_j^1,A_j^2,\cdots,A_j^n)$, then $|A_i^m|\leq a_i$ and $|A_j^m|\leq a_j$ for all $m=1,2,\ldots,n$ since $d_{\infty}(A_i,O)=d((i,a_i),o)=a_i$ and $d_{\infty}(A_j,O)=d((j,a_j),o)=a_j$. Hence, we obtain
\[
d_{\infty}(A_i,A_j)=\max_{m=1}^{n}\{|A_i^m-A_j^m|\}<a_i+a_j=d((i,a_i),(j,a_j)),
\]
which contradicts the assumption that $f$ is an isometry.
\end{proof}

\begin{proposition}
\label{a20}
 A metric star tree $X$ with $k$ leaves can be embedded isometrically into $(\mathbb{R}^n,d_1)$ if and only if $k\leq 2n$.
\end{proposition}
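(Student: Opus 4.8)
The plan is to mirror the structure of the proof of Proposition \ref{a0}, replacing the $2^n$ extreme points of the $d_\infty$ unit ball with the $2n$ extreme points of the $d_1$ unit ball, namely the vectors $\pm e_1, \ldots, \pm e_n$ where $e_i$ are the standard basis vectors.

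For the sufficiency direction ($\Leftarrow$), suppose $k \leq 2n$. I would label the $2n$ signed basis vectors as $v_1, \ldots, v_{2n}$ (so $\{v_1,\dots,v_{2n}\} = \{\pm e_1,\dots,\pm e_n\}$) and define $f : X \to (\mathbb{R}^n, d_1)$ by $f(o) = O$ and $f((i,t)) = t \cdot v_i$, exactly as before. The key computation is that for $i \neq j$ one has $d_1(t\cdot v_i, s\cdot v_j) = t + s$, which I would verify by splitting into two cases: if $v_i$ and $v_j$ involve distinct coordinate axes, the $\ell^1$-norm of $t v_i - s v_j$ is simply $t + s$; if they are $\pm$ of the same axis (e.g.\ $v_i = e_m$, $v_j = -e_m$), then $t v_i - s v_j = (t+s)e_m$, whose $\ell^1$-norm is again $t+s$. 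The cases $d_1(f(i,t),f(o)) = t$ and $d_1(f(i,t),f(i,s)) = |t-s|$ are immediate, so $f$ is an isometric embedding.

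The necessity direction ($\Rightarrow$) is where the main obstacle lies, since the clean sign-pigeonhole argument of Proposition \ref{a0} does not transfer verbatim to $d_1$. Assume $k > 2n$ and suppose for contradiction that an isometric embedding $f$ exists with $f(o) = O$. Writing $A_i = f((i,a_i))$, the constraint $d_1(A_i, O) = a_i$ means $\sum_{m=1}^n |A_i^m| = a_i$, while isometry on each pair forces $\sum_{m=1}^n |A_i^m - A_j^m| = a_i + a_j$ for all $i \neq j$. The plan is to show these $k > 2n$ equalities are incompatible. The natural device is to introduce, for each leaf $i$, the sign vector recording the sign of each coordinate of $A_i$; since each $A_i$ has $\ell^1$-norm exactly $a_i$, the condition $d_1(A_i,A_j) = a_i + a_j$ should force that on every coordinate $m$ the signs of $A_i^m$ and $A_j^m$ disagree (or one vanishes), because otherwise cancellation in $|A_i^m - A_j^m|$ strictly reduces the sum below $a_i + a_j$. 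I expect this to reduce the problem to a combinatorial statement: one cannot have more than $2n$ vectors in $\mathbb{R}^n$ that are pairwise ``sign-opposed'' coordinatewise on their supports. The hard part will be making the degenerate cases (coordinates where $A_i^m = 0$) rigorous, so that the sign-incompatibility argument yields at most $2n$ admissible leaves and hence the desired contradiction.
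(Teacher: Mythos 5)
Your proposal follows essentially the same route as the paper in both directions: the sufficiency construction (mapping the $i$-th leg onto the segment $[O, a_i v_i]$ for the $2n$ signed basis vectors) is identical, and your necessity argument is the paper's argument in contrapositive form. The ``hard part'' you defer is not actually hard, and it is exactly the paper's pigeonhole: each image $A_i$ is nonzero (since $d_1(A_i,O)=a_i>0$), so it has at least one nonzero coordinate with a definite sign, which assigns to leaf $i$ one of the $2n$ labels (coordinate, sign); with $k>2n$ leaves, two images $A_i, A_j$ must share a label, i.e.\ share a coordinate $m$ on which both are nonzero with the same sign, whence $|A_i^m-A_j^m|<|A_i^m|+|A_j^m|$ and summing over coordinates gives $d_1(A_i,A_j)<a_i+a_j$, contradicting isometry. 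The degenerate coordinates you worry about ($A_i^m=0$ or $A_j^m=0$) cause no trouble, since there $|A_i^m-A_j^m|=|A_i^m|+|A_j^m|$ holds with equality and such coordinates simply never get counted in the pigeonhole; so your plan completes in one line rather than requiring a delicate combinatorial lemma.
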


\begin{proof}
($\Leftarrow$)
First assume $k\leq 2n$. Note that there are $2n$ extreme point on the unit ball of the space $(\mathbb{R}^n,d_1)$, which are $ e_i=(\delta_{i,j})_{j=1}^{n}$, where $\delta_{i,j}$ is the Kronecker delta. We will denote these extreme points in $\mathbb{R}^n$ by $E_1,E_2,\ldots,E_{2n}$ and define the map $f:X\rightarrow(\mathbb{R}^n,d_1)$ by $f(o)=O$ and $f((i,t))= t\cdot E_i$. We will show that $f$ is an isometric embedding from $X$ to $(\mathbb{R}^n,d_1)$:
$$d_1(f(i,t),f(o))=d_1(t\cdot E_i,O)=t=d((i,t),o),$$
$$d_1(f(i,t),f(i,s))=d_1(t\cdot E_i,s\cdot E_i)=|t-s|=d((i,t),(i,s)),$$
$$d_1(f(i,t),f(j,s))=d_1(t\cdot E_i,s\cdot E_j)=t+s=d((i,t),(j,s)),  \,\,\mbox{where}\,\, i\neq j .$$

($\Rightarrow$)
Assume that $k>2n$ but there exists an isometric embedding $f$ from $X$ to $(\mathbb{R}^n,d_1)$. We can assume that the embedding takes the center of $X$ to the origin because translation is an isometry. Then, we can find two points $(i,a_i)$ and $(j,a_j)$ in $X$ with $i\neq j$ such that at least one common coordinate of their images are nonzero and have the same sign. If their images are $f(i,a_i)=A_i=(A_i^1,A_i^2,\cdots,A_i^n)$ and $f(j,a_j)=A_j=(A_j^1,A_j^2,\cdots,A_j^n)$, then we obtain
\begin{eqnarray*}
d((i,a_i),(j,a_j))&=&d((i,a_i),o)+d(o, (j,a_j))=d_1(A_i,0)+d_1(0,A_j)\\
&=&|A_i^1|+|A_i^2|+\cdots+|A_i^n|+|A_j^1|+|A_j^2|+\cdots+|A_j^n| \\
 &>& |A_i^1-A_j^1|+|A_i^2-A_j^2|+\cdots+|A_i^n-A_j^n|\\
 &=&d_1(A_i,A_j),
\end{eqnarray*}
which is a contradiction.
\end{proof}

\section{Embedding Arbitrary Metric Trees}

For later use, we recall some metric preliminaries.

\begin{definition}
For $p=(p_1,p_2,\ldots,p_n)\in( \mathbb{R}^n, d_{\infty})$, we define
\begin{eqnarray*}
S_i^{+}(p)&=&\{q=(q_1,q_2,\ldots,q_n)\in\mathbb{R}^n|\ d_{\infty}(p,q)=q_i-p_i\}, \\
S_i^{-}(p)&=&\{q=(q_1,q_2,\ldots,q_n)\in\mathbb{R}^n|\ d_{\infty}(p,q)=p_i-q_i\}
\end{eqnarray*}
for $i=1,2,\ldots,n$ and call them the sectors at the point $p$  as shown in the following
Fig.~\ref{fd1} and Fig.~\ref{fd2}. Notice that if $q$ belongs to $S_i^{\varepsilon}(p)$, $S_i^{\varepsilon}(q)\subseteq S_i^{\varepsilon}(p)$ holds, where $\varepsilon \in \{+,-\}$.
\end{definition}

\begin{figure}[h]
\begin{center}
\begin{pspicture*}(-1.5,-1)(3,3.5)
\pspolygon*[linecolor=lightgray](0.5,1)(2.5,3)(2.5,-1)
\psline(0.5,1)(2.5,3) \psline(0.5,1)(2.5,-1)
\pspolygon*[linecolor=gray](0.5,1)(2.5,3)(-1.5,3)
\psline(0.5,1)(-1.5,3) \psline(0.5,1)(2.5,3)
\pspolygon*[linecolor=lightgray](0.5,1)(-1.5,-1)(-1.5,3)
\psline(0.5,1)(-1.5,-1) \psline(0.5,1)(-1.5,3)
\pspolygon*[linecolor=gray](0.5,1)(-1.5,-1)(2.5,-1)
\psline(0.5,1)(2.5,-1) \psline(0.5,1)(-1.5,-1)
\psline{->}(-1.5,0.5)(2.5,0.5) \uput[r](2.5,0.5){$x$}
\psline{->}(-0.25,-1.5)(-0.25,3) \uput[u](-0.25,3){$y$} \psdot(0.5,1)
\uput[l](0.5,1){$p$} \uput[d](2,1.5){$S_1^+(p)$}
\uput[u](0.7,1.8){$S_2^+(p)$} \uput[l](-0.25,1.2){$S_1^-(p)$}
\uput[d](0.75,0){$S_2^-(p)$}
\end{pspicture*}
\caption{Sectors of a point $p$ in $(\mathbb{R}^2, d_{\infty})$.}
\label{fd1}
\end{center}
\end{figure}
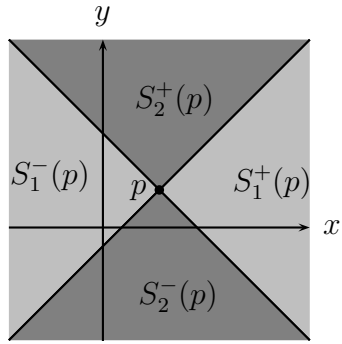

\begin{figure}[h!]
\begin{center}
\begin{pspicture*}(-2,-0.5)(8,6.5)
\psset{unit=0.85}
\pspolygon*[linecolor=gray](4,0)(3.25,2.75)(4,4)(6,6)(6,2)(4,0)
\psline(0,4)(4,4)(4,0)
\psline(0,4)(2,6)(6,6)(6,2)(4,0)
\psline(6,6)(4,4)
\psline(4,0)(0,0)(0,4)
\psline[linewidth=0.5pt, linestyle=dashed](0,0)(2,2)(6,2)
\psline[linewidth=0.5pt, linestyle=dashed](2,2)(2,6)
\psdot(3.25,2.75)
\uput[d](3.25,2.75){$O$}
\psline(3.25,2.75)(4,4)
\psline(3.25,2.75)(6,6)
\psline(3.25,2.75)(6,2)
\psline(3.25,2.75)(4,0)
\uput[r](4.2,3.25){$S_2^{+}(O)$}
\psline{->}(3.25,2.75)(2.9,2.4)
\uput[d](2.75,2.75){$x$}
\psline{->}(3.25,2.75)(3.75,2.75)
\uput[r](3.45,2.88){$y$}
\psline{->}(3.25,2.75)(3.25,3.25)
\uput[u](3.25,3.25){$z$}
\end{pspicture*}
\caption{The sector $S_2^{+
}(O)$ of the origin in $(\mathbb{R}^3,d_{\infty})$.}
\label{fd2}
\end{center}
\end{figure}
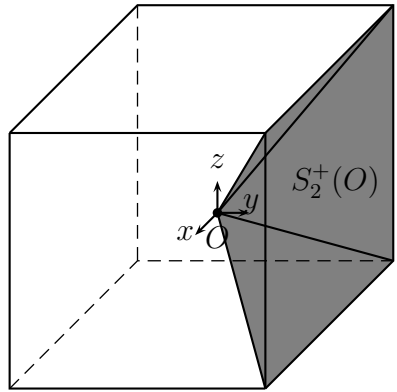

The following theorem gives a characterization of geodesics in $(\mathbb{R}^n, d_{\infty})$.
\begin{figure}[h]
\begin{center}
\begin{pspicture*}(-1.5,-2)(8,2.5)
\psline{->}(-1.5,0)(2.5,0) \uput[r](2.5,0){$x$}
\psline{->}(0,-1.5)(0,2) \uput[u](0,2){$y$}
\psline[linewidth=0.5pt, linecolor=darkgray](0.5,2)(-1,0.5)(0.5,-1)
\psline[linewidth=0.5pt, linecolor=darkgray](1.5,1.6)(0,0.1)(1.5,-1.4)
\psline[linewidth=0.5pt, linecolor=darkgray](2.5,0.9)(1,-0.6)(2.5,-2.1)
\psdot(-1,0.5)
\uput[l](-1,0.5){$p$}
\psdot(2,-0.25)
\uput[r](2,-0.25){$q$}
\pscurve(-1,0.5)(0,0.1)(1,-0.6)(2,-0.25)

\psline{->}(3.5,0)(7.5,0) \uput[r](7.5,0){$x$}
\psline{->}(5,-1.5)(5,2) \uput[u](5,2){$y$}
\psline[linewidth=0.5pt, linecolor=darkgray](5.5,2)(4,0.5)(5.5,-1)
\psline[linewidth=0.5pt, linecolor=red](6.5,1.75)(5,0.25)(6.5,-1.25)
\psdot(4,0.5)
\uput[l](4,0.5){$p$}
\psdot(7,-0.25)
\uput[r](7,-0.25){$q$}
\pscurve(4,0.5)(5,0.25)(6,1.25)(7,-0.25)
\end{pspicture*}
\caption{Two paths between $p$ and $q$ in $\mathbb{R}^2_{\infty}$ one of which (on the left) is a geodesic but the other is not.}
\label{fd3}
\end{center}
\end{figure}
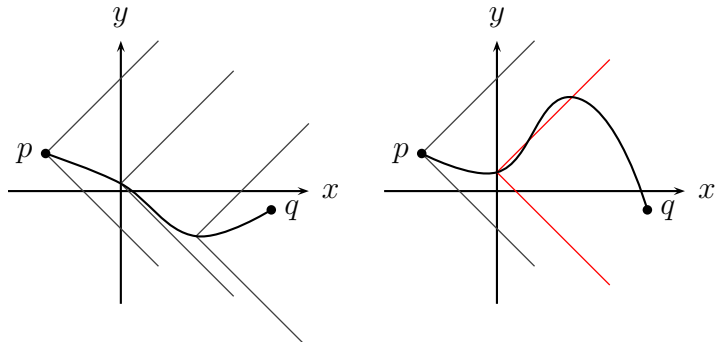
\begin{proposition}[Theorem 2.2 of\cite{Mehmet}]
\label{d1}
Let $p=(p_1,p_2,\ldots,p_n)$, $q=(q_1,q_2,\ldots,q_n)\in\mathbb{R}^n$ be two points, $q\in S_i^\varepsilon(p)$ and $\alpha:[0,d(p,q)\,]\rightarrow \mathbb{R}^n$ be a  path such that $\alpha(0)=p$ and $\alpha(d(p,q))=q$. Then $\alpha$ is a geodesic in $\mathbb{R}_\infty^n$ if and only if $\alpha(t')\in S_i^\varepsilon(\alpha(t))$ for all $t,t'\in [0,d(p,q)]$ such that $t<t'$. (See Fig~\ref{fd3})
\end{proposition}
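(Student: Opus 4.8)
The plan is to work directly from the characterization of a geodesic as a shortest path: since $\alpha$ is prescribed on $[0,d(p,q)]$ with the given endpoints (and need not be arc-length parametrized), the relevant statement is the additivity $d_\infty(\alpha(s),\alpha(u))=d_\infty(\alpha(s),\alpha(t))+d_\infty(\alpha(t),\alpha(u))$ for all $s\le t\le u$, which is equivalent to $\alpha$ tracing a geodesic segment monotonically. I would first reduce to the case $\varepsilon=+$: the reflection $(x_1,\dots,x_i,\dots,x_n)\mapsto(x_1,\dots,-x_i,\dots,x_n)$ is an isometry of $(\mathbb{R}^n,d_\infty)$ that interchanges $S_i^+$ and $S_i^-$, so treating one sign suffices. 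Recall also that $q\in S_i^+(p)$ means exactly $d_\infty(p,q)=q_i-p_i$.

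For ($\Leftarrow$), assume $\alpha(t')\in S_i^+(\alpha(t))$ whenever $t<t'$, that is $d_\infty(\alpha(t),\alpha(t'))=\alpha_i(t')-\alpha_i(t)$. Then for $s\le t\le u$ the differences in the $i$-th coordinate telescope, and $d_\infty(\alpha(s),\alpha(u))=\alpha_i(u)-\alpha_i(s)=(\alpha_i(u)-\alpha_i(t))+(\alpha_i(t)-\alpha_i(s))=d_\infty(\alpha(t),\alpha(u))+d_\infty(\alpha(s),\alpha(t))$, which is exactly the additivity identity; hence $\alpha$ is a geodesic.

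For ($\Rightarrow$), assume $\alpha$ is a geodesic, fix $t<t'$, and set $a=\alpha(t)$, $b=\alpha(t')$. Applying additivity to $0\le t\le t'$ and then to $0\le t'\le d(p,q)$ gives $d_\infty(p,a)+d_\infty(a,b)+d_\infty(b,q)=d_\infty(p,q)=q_i-p_i$. Now estimate each term from below by its $i$-th coordinate gap, $d_\infty(p,a)\ge|a_i-p_i|$, $d_\infty(a,b)\ge|b_i-a_i|$, $d_\infty(b,q)\ge|q_i-b_i|$, and invoke the scalar triangle inequality $|a_i-p_i|+|b_i-a_i|+|q_i-b_i|\ge q_i-p_i$. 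Chaining these with the displayed equality forces every inequality to be an equality; the equality case of the scalar triangle inequality makes the three increments $a_i-p_i$, $b_i-a_i$, $q_i-b_i$ all nonnegative (their sum being $q_i-p_i>0$), so $d_\infty(a,b)=|b_i-a_i|=b_i-a_i$, i.e. $b\in S_i^+(a)$, as needed.

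The step I expect to be the crux is this last extraction in ($\Rightarrow$): one must simultaneously pin down that coordinate $i$ realizes the $d_\infty$-distance between $a$ and $b$ (the magnitude) and that it does so with the correct orientation (the sign), and both fall out only after marrying the additivity of distances along $\alpha$ with the one-dimensional triangle inequality in the single coordinate $i$. I would also dispose of the degenerate case $p=q$ (where the domain is a point and the claim is vacuous) and confirm that the argument uses only $q\in S_i^\varepsilon(p)$ and not the finer sector-nesting remark $S_i^\varepsilon(q)\subseteq S_i^\varepsilon(p)$ recorded after the definition.
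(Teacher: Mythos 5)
You should know at the outset that the paper contains no proof of this proposition to compare against: it is imported wholesale as Theorem 2.2 of \cite{Mehmet}. So your argument is not paralleling or diverging from an internal proof --- it is supplying one, and it is correct. The reduction to $\varepsilon=+$ via reflection in the $i$-th coordinate is legitimate (that reflection is a $d_\infty$-isometry exchanging $S_i^+$ and $S_i^-$); the ($\Leftarrow$) direction is the telescoping of the identities $d_\infty(\alpha(t),\alpha(t'))=\alpha_i(t')-\alpha_i(t)$; and the ($\Rightarrow$) direction correctly chains the additivity $d_\infty(p,a)+d_\infty(a,b)+d_\infty(b,q)=d_\infty(p,q)=q_i-p_i$ with the coordinatewise lower bounds and the equality case of the scalar triangle inequality, which simultaneously pins the magnitude, $d_\infty(a,b)=|b_i-a_i|$, and the sign, $b_i-a_i\ge 0$; that is indeed the crux, as you predicted. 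The one point you were right to confront explicitly is what ``geodesic'' means: the sector condition is invariant under monotone reparametrization of $\alpha$, whereas the paper's Definition 2.1 notion (isometric embedding of an interval) is not --- already in $\mathbb{R}^1$ the path $\alpha(t)=t^2$ on $[0,1]$ from $0$ to $1$ satisfies the sector condition without being an isometric embedding. So the proposition is true precisely under the reading you adopted, namely that ``$\alpha$ is a geodesic'' means the additivity property (equivalently, that $\alpha$ is a shortest path, traversing the metric segment $[p,q]$ monotonically); your disclaimer is therefore not pedantry but the exact hypothesis on which the statement's truth rests. Your closing observations are also accurate: the case $p=q$ is vacuous, and the nesting $S_i^\varepsilon(q)\subseteq S_i^\varepsilon(p)$ is never used here --- it is needed instead in the paper's Lemma \ref{a1}, where this proposition gets applied.
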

We will also need the following ``Shortening Lemma'':

\begin{lemma} \label{a1}
Let $\alpha:[0,b]\rightarrow(\mathbb{R}^n,d_{\infty})$ be a geodesic and $0<c<d<b$. Then, $\tilde{\alpha}:[0, b-d+c]\rightarrow(\mathbb{R}^n,d_{\infty})$,
 \begin{equation*}
 \tilde{\alpha}(t)=\left\{\begin{array}{cll}
 \alpha(t) &when& t\in [0,c]\\
 \alpha(t-c+d)-\alpha(d)+\alpha(c) &when& t\in [c,b-d+c]\\
 \end{array}\right.
 \end{equation*}
 is a geodesic.
\end{lemma}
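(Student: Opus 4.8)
The plan is to verify that $\tilde\alpha$ meets the sector criterion of Proposition \ref{d1} rather than to estimate path lengths directly. First I would normalize: since $\alpha$ is a geodesic on $[0,b]$ it is an isometric embedding, so $d_\infty(\alpha(t),\alpha(t'))=t'-t$ for $0\le t<t'\le b$, and by Proposition \ref{d1} there is a fixed index $i$ and sign $\varepsilon$ (say $\varepsilon=+$; the case $\varepsilon=-$ is symmetric) with $\alpha(t')\in S_i^+(\alpha(t))$ for all $t<t'$. The key consequence I would extract is a description of the coordinates of $\alpha$: the sector condition gives $\alpha(t')_i-\alpha(t)_i=d_\infty(\alpha(t),\alpha(t'))=t'-t$, so the $i$-th coordinate grows at unit speed, $\alpha(t)_i=\alpha(0)_i+t$, while every other coordinate is $1$-Lipschitz, $|\alpha(t')_j-\alpha(t)_j|\le t'-t$.

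Next I would record the analogous facts for $\tilde\alpha$. A direct substitution shows $\tilde\alpha$ is continuous, since the two branches agree at $t=c$ (both giving $\alpha(c)$), and, crucially, that its $i$-th coordinate is again linear: on $[0,c]$ one has $\tilde\alpha(t)_i=\alpha(0)_i+t$, and on $[c,b-d+c]$ the translation cancels the additive constant so that $\tilde\alpha(t)_i=\alpha(t-c+d)_i-\alpha(d)_i+\alpha(c)_i=\alpha(0)_i+t$ as well. Thus $\tilde\alpha(t)_i=\alpha(0)_i+t$ throughout, which already forces $d_\infty(\tilde\alpha(t),\tilde\alpha(t'))\ge \tilde\alpha(t')_i-\tilde\alpha(t)_i=t'-t$. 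It therefore remains to prove the reverse inequality $|\tilde\alpha(t')_j-\tilde\alpha(t)_j|\le t'-t$ for every coordinate $j$ and all $t<t'$, since that yields $\tilde\alpha(t')\in S_i^+(\tilde\alpha(t))$ and hence, via Proposition \ref{d1}, that $\tilde\alpha$ is a geodesic. When both $t,t'$ lie in $[0,c]$ this is just the isometry property of $\alpha$; when both lie in $[c,b-d+c]$ the translation cancels, so $\tilde\alpha(t')-\tilde\alpha(t)=\alpha(s')-\alpha(s)$ with $s=t-c+d<s'=t'-c+d$, and again the isometry property of $\alpha$ gives $s'-s=t'-t$.

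The genuine obstacle is the mixed ``junction'' case $t\in[0,c]$, $t'\in[c,b-d+c]$, where the two pieces meet across the excised segment $[c,d]$. Here I would split the difference as
\[
\tilde\alpha(t')_j-\tilde\alpha(t)_j=\bigl(\alpha(t'-c+d)_j-\alpha(d)_j\bigr)+\bigl(\alpha(c)_j-\alpha(t)_j\bigr)
\]
and bound each bracket separately by the $1$-Lipschitz property of the coordinates of $\alpha$: the first has absolute value at most $(t'-c+d)-d=t'-c$ and the second at most $c-t$, so the triangle inequality gives $|\tilde\alpha(t')_j-\tilde\alpha(t)_j|\le(t'-c)+(c-t)=t'-t$, exactly as needed. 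I expect this to be the only delicate point, since it is the one place where coordinates from the two glued subpaths are compared across the removed interval, whereas every other case reduces immediately to the fact that $\alpha$ is an isometric embedding. Finally, taking $t=0$ and $t'=b-d+c$ confirms $d(\tilde\alpha(0),\tilde\alpha(b-d+c))=b-d+c$ and $\tilde\alpha(b-d+c)\in S_i^+(\tilde\alpha(0))$, so Proposition \ref{d1} applies on the full domain and completes the verification.
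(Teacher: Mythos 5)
Your proof is correct, and its skeleton coincides with the paper's: both use Proposition \ref{d1} to extract a fixed index $i$ and sign valid for all pairs $t<t'$, both split into the same three cases (both parameters in $[0,c]$, both in $[c,b-d+c]$, and the mixed junction case), and both treat the junction as the crux. The difference is in execution. The paper stays entirely at the level of sectors: the second case follows from translation-invariance of the sector relation, the mixed case from the nesting property $q\in S_i^\varepsilon(p)\Rightarrow S_i^\varepsilon(q)\subseteq S_i^\varepsilon(p)$ recorded after the definition of sectors, and the conclusion is obtained by invoking the converse direction of Proposition \ref{d1}. You instead unpack the sector relation into coordinates (the $i$-th coordinate of $\alpha$ moves at unit speed, all others are $1$-Lipschitz) and settle the junction case by splitting the increment across the excised interval and using the triangle inequality; this is the nesting property in different clothing, so nothing essentially new happens there. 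What your version buys is self-containment: since you establish both the lower bound $d_\infty(\tilde\alpha(t),\tilde\alpha(t'))\ge t'-t$ (from coordinate $i$) and the upper bound on every coordinate, you have proved $d_\infty(\tilde\alpha(t),\tilde\alpha(t'))=t'-t$ outright, i.e., that $\tilde\alpha$ is an isometric embedding, which is precisely the paper's definition of a geodesic; your closing appeal to Proposition \ref{d1} is therefore redundant, and you also verify explicitly that the endpoint distance equals $b-d+c$, a hypothesis of Proposition \ref{d1} that the paper's proof leaves implicit. What the paper's version buys is brevity: no coordinate computations at all, at the price of relying on both directions of Proposition \ref{d1}.
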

\begin{proof}

Assume that $\alpha(b)\in S_i^\varepsilon(\alpha(0))$ for some $i\in\{1,2,\ldots,n\}$ and $\varepsilon\in \{+, -\}$. Since $\alpha$ is a geodesic, for all $t,t'\in [0,b]$ such that $t<t'$ we get $\alpha(t')\in S_i^\varepsilon(\alpha(t))$.  Given $t,t'\in [0,b-d+c]$ with $t<t'$. We consider three possibilities: if $t,t'\in [0,c]$,  then $\tilde{\alpha}(t')\in S_i^\varepsilon(\tilde{\alpha}(t))$ because $\tilde{\alpha}= \alpha$ on $[0,c]$. If $t,t'\in [c,b-d+c]$, then $\tilde{\alpha}(t')\in S_i^\varepsilon(\tilde{\alpha}(t))$ because $\alpha(t'-c+d)\in  S_i^\varepsilon( \alpha(t-c+d))$ and this implies $\alpha(t'-c+d)-\alpha(d)+\alpha(c)\in  S_i^\varepsilon( \alpha(t-c+d)-\alpha(d)+\alpha(c))$ . If $t\in [0,c]$ and $t'\in [c,b-d+c]$, we know that $\tilde{\alpha}(t')\in S_i^\varepsilon(\tilde{\alpha}(c))$ and $\tilde{\alpha}(c)\in S_i^\varepsilon(\tilde{\alpha}(t))$. Since $\tilde{\alpha}(c)\in S_i^\varepsilon(\tilde{\alpha}(t))$, $S_i^\varepsilon(\tilde{\alpha}(c))\subseteq S_i^\varepsilon(\tilde{\alpha}(t))$; hence, we get $\tilde{\alpha}(t')\in S_i^\varepsilon(\tilde{\alpha}(t))$. Thus, previous proposition implies that $\tilde{\alpha}$
 is a geodesic.
\end{proof}

\begin{proposition}
A finite metric tree with more than $2^n$ leaves can not be embedded isometrically into $(\mathbb{R}^n,d_{\infty})$.
\end{proposition}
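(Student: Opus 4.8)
The plan is to discard the ambient geometry and work instead with the $n$ coordinate functions of the embedding, recasting the leaf bound as a purely combinatorial covering statement. Suppose $f=(f_1,\dots,f_n)\colon T\to(\mathbb R^n,d_\infty)$ is an isometric embedding. Each coordinate $f_m\colon T\to\mathbb R$ is $1$-Lipschitz, and the defining identity $d(x,y)=\max_{1\le m\le n}|f_m(x)-f_m(y)|$ says precisely that for every pair of leaves $\ell,\ell'$ there is at least one coordinate $m$ with $|f_m(\ell)-f_m(\ell')|=d(\ell,\ell')$; I will say such an $m$ \emph{resolves} the pair. Since $d(\ell,\ell')$ is exactly the length of the tree-path $[\ell,\ell']$ and $f_m$ is $1$-Lipschitz, a one-line triangle-inequality squeeze forces a resolving coordinate to be \emph{linear of slope $\pm1$ along the whole of} $[\ell,\ell']$. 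Let $G_m$ be the graph on the set of $k$ leaves whose edges are the pairs resolved by $m$; by construction the union $\bigcup_{m=1}^n G_m$ is the complete graph $K_k$.

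The heart of the argument is the claim that each $G_m$ is \emph{bipartite}, which I would establish by an explicit $2$-colouring of the leaves adapted to coordinate $m$. If the incident edge of a leaf $\ell$ is traversed by $f_m$ with slope $\pm1$, colour $\ell$ by the sign of the change of $f_m$ as one moves \emph{toward} $\ell$ along that edge ($+$ if $f_m$ increases toward $\ell$, $-$ if it decreases); any leaf whose incident edge is not traversed at unit rate lies in no edge of $G_m$ and may be coloured arbitrarily. Now if $\ell,\ell'$ are resolved by $m$, then $f_m$ is linear of slope $\pm1$ along all of $[\ell,\ell']$, in particular along both incident edges, and monotonicity of $f_m$ along the path forces the two ``toward-the-leaf'' directions to carry opposite signs; hence $\ell$ and $\ell'$ receive opposite colours. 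Thus every edge of $G_m$ joins oppositely coloured leaves, so $G_m$ is bipartite. In the language of Section~4 this is the statement that the neighbour of a resolving leaf sits in a fixed sector $S_m^{\pm}$, with Proposition~\ref{d1} dictating which one.

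It then remains to invoke the elementary fact that the edges of $K_k$ cannot be covered by $n$ bipartite graphs once $k>2^n$: assigning to each leaf the vector of its $n$ colours in $\{+,-\}^n$, any two leaves sharing a vector lie on the same side of every $G_m$, so the edge joining them is covered by no $G_m$, contradicting $\bigcup_m G_m=K_k$. Hence the colour vectors are pairwise distinct and $k\le 2^n$, giving the desired contradiction whenever $k>2^n$. This also recovers the star-tree estimate of Proposition~\ref{a0} as the special case in which a single vertex separates every pair of leaves, there being essentially one colour vector per orthant.

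The step I expect to demand the most care is the bipartiteness of $G_m$. One must verify that the ``toward-the-leaf'' sign is genuinely well defined, which rests on the observation that the incident edge of any leaf appearing in an edge of $G_m$ is automatically traversed with slope $\pm1$, so $f_m$ is strictly monotone there; and one must check that leaves incident to no resolving edge never create a monochromatic edge. The slope-$\pm1$ squeeze and the bipartite-covering count are routine. The only real idea is the global consistency of the per-coordinate $2$-colouring, and it is exactly this that upgrades the mere $2n$ sectors available at a single point into the full $2^n$ sign patterns, accounting for the exponential bound.
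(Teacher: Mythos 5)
Your proof is correct, and it takes a genuinely different route from the paper's. The paper argues geometrically: it takes the edge incident to each leaf $a_i$ (with image segment from $B_i$ to $A_i$), shows the direction vectors $A_i-B_i$ are pairwise non-positively-proportional via a triangle-inequality computation, translates all these segments to a common origin, and uses its Shortening Lemma (Lemma~\ref{a1}) to verify that the resulting set $\{t\,(A_i-B_i)\mid 0\le t\le 1\}$ is an isometrically embedded $k$-star, which contradicts Proposition~\ref{a0} when $k>2^n$. You instead work coordinate-by-coordinate: each coordinate of the embedding is $1$-Lipschitz, a coordinate attaining $d(\ell,\ell')$ must be affine of slope $\pm1$ along the whole tree-path (your squeeze argument is sound, since the path is isometric to an interval), and this yields a well-defined $2$-colouring of the leaves per coordinate under which each ``resolving'' graph $G_m$ is bipartite; since every pair of leaves is resolved by some coordinate, $K_k$ is covered by $n$ bipartite graphs, forcing $k\le 2^n$ by pigeonhole on the $\{+,-\}^n$ colour vectors. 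The two proofs share the same ultimate engine---pigeonhole over $2^n$ sign patterns, which in the paper is hidden inside the proof of Proposition~\ref{a0}---but your argument applies it directly, is self-contained (no Shortening Lemma, no non-proportionality claim, no reduction to the star case), and simultaneously reproves the necessity direction of Proposition~\ref{a0}; what the paper's route buys is reuse of machinery it has already built and a more geometric picture of \emph{why} the obstruction is a star sitting inside the tree. One small point worth making explicit if you write this up: the colour of a leaf is well defined because its incident edge has positive length, so $f_m$ cannot move along it with slope $+1$ and $-1$ simultaneously, and any leaf touched by an edge of $G_m$ automatically has its incident edge traversed at unit speed---you noted both, and both are needed.
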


\begin{proof}
Let us assume that $X$ has $k$ leaves, $k>2^n$ and $f:X\rightarrow(\mathbb{R}^n,d_{\infty})$ be an isometric embedding. Denote the leaves $a_1,a_2,\ldots,a_k$, and their images under $f$ by $A_1,A_2,\ldots,A_k$, i.e. $f(a_i)=A_i$. Let us denote the vertex points on $X$ by $b_i$ for $i=1,2,\ldots,k$ such that there exists an edge between $a_i$ and $b_i$ and assume $f(b_i)=B_i$. Note that the vector $A_iB_i$ can not be equal to $t\cdot(A_jB_j)$ for $i\neq j$ and $t>0$. Because if we assume $A_iB_i=t\cdot(A_jB_j)$ or equivalently $B_i-A_i=t\cdot(B_j-A_j)$, we get
 \begin{eqnarray*}
||A_i-A_j||&=&||(A_i-B_i)+(B_i-B_j)+(B_j-A_j)||\\
&=&||(1-t)(B_j-A_j)+(B_i-B_j)|| \\
&\leq&|1-t|\cdot||B_j-A_j||+||B_i-B_j||
\end{eqnarray*}

On the other hand, since the geodesic from $a_i$ to $a_j$ passes through $b_i$ and $b_j$ respectively, we have

\begin{eqnarray*}
  ||A_i-A_j|| &=& ||A_i-B_i||+||B_i-B_j||+||B_j-A_j)|| \\
   &=& t||B_j-A_j||+||B_i-B_j||+||B_j-A_j)|| \\
   &=& (1+t)||B_j-A_j||+||B_i-B_j||
\end{eqnarray*}
and this contradicts previous inequality.

Now consider the set $\{ t . (A_i-B_i) \, |\, 0\leq t \leq 1, \, i=1,2,\ldots,k\}$. According to Lemma~\ref{a1}, this set is a star tree with $k > 2^n$ leaves. But this contradicts the Proposition~\ref{a0}.
\end{proof}

\begin{proposition}
A finite metric tree with more than $2n$ leaves can not be embedded isometrically into $(\mathbb{R}^n,d_1)$.
\end{proposition}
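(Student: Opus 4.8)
The plan is to mirror the proof of the preceding proposition for $(\mathbb{R}^n,d_\infty)$, replacing the maximum metric by $d_1$ and Proposition~\ref{a0} by Proposition~\ref{a20}. Suppose, for contradiction, that $X$ has $k>2n$ leaves $a_1,\dots,a_k$ and that $f\colon X\to(\mathbb{R}^n,d_1)$ is an isometric embedding. For each $i$ let $b_i$ be the vertex adjacent to the leaf $a_i$, and set $A_i=f(a_i)$, $B_i=f(b_i)$ and $u_i=A_i-B_i$. Since $a_i\neq b_i$ and edges carry positive weight, each $u_i\neq O$, and $\|u_i\|_1=d_1(A_i,B_i)$ equals the length of the edge at $a_i$.

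First I would check that the edge vectors $u_i$ are pairwise non-parallel in the positive sense, i.e.\ $u_i\neq t\,u_j$ for $i\neq j$ and $t>0$. This is exactly the triangle-inequality argument used in the $d_\infty$ case, and it only invokes the triangle inequality together with additivity of distance along the geodesic $a_i\to b_i\to b_j\to a_j$; hence it is valid verbatim for $d_1$. Indeed, if $B_i-A_i=t(B_j-A_j)$ with $t>0$, then on one hand $d_1(A_i,A_j)\le|1-t|\,\|B_j-A_j\|_1+\|B_i-B_j\|_1$, while additivity gives $d_1(A_i,A_j)=(1+t)\,\|B_j-A_j\|_1+\|B_i-B_j\|_1$; as $1+t>|1-t|$ for $t>0$, this is impossible. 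Consequently the rays $\mathbb{R}_{\ge 0}u_i$ are pairwise distinct, so the translated segments give $k$ distinct edges from the origin.

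Next I would translate every edge to the common center and consider
\[
S=\bigl\{\, t\,u_i \ :\ 0\le t\le 1,\ i=1,\dots,k \,\bigr\}\subset(\mathbb{R}^n,d_1),
\]
claiming that $S$, with the induced metric, is a metric star tree with $k$ leaves $u_1,\dots,u_k$ and center $O$. Each segment $[O,u_i]$ is a $d_1$-geodesic, so distances along a single edge are correct; the substance is the star-tree identity $d_1(p,q)=\|p\|_1+\|q\|_1$ for $p\in[O,u_i]$, $q\in[O,u_j]$ with $i\neq j$. Here is where I would depart from the $d_\infty$ argument: rather than invoke Lemma~\ref{a1}, which is formulated only for the maximum metric, I would use the coordinatewise description of geodesics in $(\mathbb{R}^n,d_1)$, namely that a path is a $d_1$-geodesic precisely when each coordinate function is monotone. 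Since $f$ is isometric, the image of the tree-geodesic from $a_i$ to $a_j$ is a $d_1$-geodesic passing through $B_i,B_j$ in the order $A_i,B_i,B_j,A_j$; hence in every coordinate $m$ the numbers $(A_i)_m,(B_i)_m,(B_j)_m,(A_j)_m$ occur monotonically, which forces $(u_i)_m=(A_i)_m-(B_i)_m$ and $(u_j)_m=(A_j)_m-(B_j)_m$ to have opposite signs (one possibly zero). Writing $p=t\,u_i$ and $q=s\,u_j$, we then get $p_m q_m\le 0$ for every $m$, so $|p_m-q_m|=|p_m|+|q_m|$ and, summing over $m$, $d_1(p,q)=\|p\|_1+\|q\|_1$, as desired.

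With the star-tree identity in hand, $S$ is a metric star tree in the sense of Example~\ref{E:tripod} with exactly $k$ leaves, isometrically embedded in $(\mathbb{R}^n,d_1)$; since $k>2n$, this contradicts Proposition~\ref{a20}, completing the proof. I expect the main obstacle to be precisely this star-tree identity, i.e.\ verifying ``betweenness through the origin'' after translation. The $d_\infty$ proof routes this through the Shortening Lemma, and the cleanest substitute in the $d_1$ setting is the monotone-coordinate characterization of geodesics sketched above; alternatively one could first prove a $d_1$ analogue of Lemma~\ref{a1}, but the direct coordinatewise route appears shorter.
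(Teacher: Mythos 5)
Your proof is correct, and at the strategic level it coincides with the paper's: translate the leaf edges to the origin, recognize the resulting set as a metric star tree with $k>2n$ leaves, and contradict Proposition~\ref{a20}. The genuine difference is in how the star-tree identity is justified, and here your version is actually more complete than the paper's. The paper disposes of this proposition in two lines by declaring it ``very similar'' to the $d_\infty$ case; but the $d_\infty$ argument rests on the Shortening Lemma (Lemma~\ref{a1}), which is stated and proved only for the maximum metric via the sector machinery of Proposition~\ref{d1}, so the transfer to $d_1$ is asserted rather than carried out. You replace that step with the observation that equality in the $d_1$ triangle inequality forces termwise equality in each coordinate, so the additivity $d_1(A_i,A_j)=d_1(A_i,B_i)+d_1(B_i,B_j)+d_1(B_j,A_j)$ makes every coordinate run monotonically through $A_i,B_i,B_j,A_j$; hence $(A_i-B_i)_m$ and $(A_j-B_j)_m$ have opposite (weak) signs, and $d_1(t\,u_i,s\,u_j)=t\|u_i\|_1+s\|u_j\|_1$ follows by summing over coordinates. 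This coordinatewise route is self-contained and cleaner for $d_1$ than proving an analogue of Lemma~\ref{a1}, and it is precisely the point the paper leaves implicit. Two minor remarks: your blanket claim that a path is a $d_1$-geodesic ``precisely when each coordinate function is monotone'' needs a parametrization caveat in the ``if'' direction (monotone coordinates characterize shortest paths as point sets, not unit-speed maps), but the implication you actually use --- additivity along ordered points forces coordinatewise monotonicity --- is exactly right; and your separate non-parallelism check, while harmless and faithful to the paper's $d_\infty$ proof, is already subsumed by the opposite-sign property, since $u_i=t\,u_j$ with $t>0$ and $u_i,u_j\neq O$ is incompatible with $(u_i)_m(u_j)_m\le 0$ for all $m$.
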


Proof of this claim is very similar to the  proof above. In fact, let us assume that $X$ has $k$ leaves with $k>2n$ and $f:X\rightarrow(\mathbb{R}^n,d_1)$ be an isometric embedding. Denote those leaves by $a_1,a_2,\ldots,a_k$, and their images by $A_1,A_2,\ldots,A_k$, i.e. $f(a_i)=A_i$. Let us denote the vertices on $X$ by $b_i$ for $i=1,2,\ldots,k$ such that there is an edge between $a_i$ and $b_i$  and assume $f(b_i)=B_i$. Now consider the set $\{t\cdot(A_i-B_i)\, |\, 0\leq t\leq1,\, i=1,2,\ldots,k\}$.
 This set is a star tree with $k>2n$ leaves. But this contradicts the Proposition~\ref{a20}.

\begin{theorem}\label{l_1}
Let $(X,d)$ be a metric tree. If $X$ contains at most $2n$ leaves, it can be embedded isometrically into $(\mathbb{R}^n,d_1)$.
\end{theorem}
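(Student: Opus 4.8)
The plan is to realize the embedding through $n$ scalar "coordinate functions," each one governed by a geodesic joining a pair of leaves, and to reduce the whole problem to a purely combinatorial covering statement. Since only $2n\ge k$ is assumed, I may take $n=\lceil k/2\rceil$ (any extra coordinates are padded with zeros). The heart of the matter is the following combinatorial fact: \emph{the $k$ leaves of $X$ can be arranged into $\lceil k/2\rceil$ unordered pairs so that every edge of $X$ lies on the geodesic joining at least one matched pair.} Granting this, the construction and its verification are essentially forced.

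To prove the combinatorial fact I would fix a root (an interior vertex) and a depth-first traversal, listing the leaves $\ell_0,\dots,\ell_{k-1}$ in order of first visit. The decisive observation is that for every edge $e$ the leaves lying in the branch cut off by $e$ occupy a \emph{contiguous} arc of this cyclic list, and the remaining leaves occupy the complementary arc. When $k$ is even I pair the leaves antipodally, $\ell_i$ with $\ell_{i+k/2}$: a proper non-empty contiguous arc cannot be invariant under the shift by $k/2$, so some leaf of the branch has its antipodal partner outside the branch, and that matched pair has its two endpoints on opposite sides of $e$; hence $e$ is covered. When $k$ is odd I use the chords $\{\ell_i,\ell_{i+\lfloor k/2\rfloor}\}$, which lets one leaf serve in two pairs (this is exactly what the spare terminal $2n=k+1$ allows); the same arc estimate, applied to whichever of the two arcs has length at most $\lfloor k/2\rfloor$, again produces a crossing pair.

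Given the pairing, let $P_1,\dots,P_n$ be the matched geodesics, each oriented from one endpoint to the other. For each edge $e$ put $c(e)=\#\{\,i : e\subset P_i\,\}\ge 1$ and $g_i(e)=1/c(e)$ whenever $e\subset P_i$, so that $\sum_{i:\,e\subset P_i}g_i(e)=1$ for every edge. I then define the $i$-th coordinate $\phi_i\colon X\to\mathbb{R}$ by letting $\pi_i(x)$ be the unique point of $P_i$ nearest to $x$ and setting $\phi_i(x)$ equal to the $g_i$-weighted length of $P_i$ from its initial endpoint up to $\pi_i(x)$; since $g_i\ge 0$, each $\phi_i$ is monotone along $P_i$ and constant on the branches hanging off it. The verification is then routine: by the three-point property recalled in the preliminaries, $\mathrm{path}(x,y)\cap P_i$ is the subpath between the gates $\pi_i(x)$ and $\pi_i(y)$, so monotonicity gives $|\phi_i(x)-\phi_i(y)|=\sum_{e\subset\, \mathrm{path}(x,y)\cap P_i}g_i(e)\,w_e$ with no cancellation, and summing over $i$,
$$d_1\bigl(\Phi(x),\Phi(y)\bigr)=\sum_{e\subset\, \mathrm{path}(x,y)}w_e\!\!\sum_{i:\,e\subset P_i}\!\!g_i(e)=\sum_{e\subset\, \mathrm{path}(x,y)}w_e=d(x,y),$$
so $\Phi=(\phi_1,\dots,\phi_n)$ is the desired isometric embedding into $(\mathbb{R}^n,d_1)$.

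I expect the combinatorial fact to be the main obstacle, and it is worth stressing why. One cannot simply take \emph{edge-disjoint} spines: a tree with many odd-degree interior vertices requires strictly more than $\lceil k/2\rceil$ edge-disjoint paths to cover its edges (for instance the "double star," which needs three edge-disjoint paths but embeds in $\mathbb{R}^2$). The spines must therefore be allowed to overlap, and the unit weight of each edge must be split fractionally, via the factor $1/c(e)$, among the spines passing through it; it is precisely this fractional splitting that keeps each $\phi_i$ a genuine $1$-Lipschitz coordinate while still accounting for the full length of every edge. Producing a leaf-pairing that covers all edges with only $\lceil k/2\rceil$ (overlapping) geodesics is exactly what the depth-first ordering together with the antipodal matching delivers.
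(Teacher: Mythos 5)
Your proof is correct, and it takes a genuinely different route from the paper's. The paper argues by induction on $n$: it chooses two leaves $A_0,A_1$ whose deletion (together with their edges, discarding any emerging degree-2 vertex) leaves a tree $Y$ with $2n$ leaves, embeds $Y$ into $(\mathbb{R}^n,d_1)$ by the inductive hypothesis via some $f$, and then spends the single new coordinate entirely on the two deleted branches, sending $x\in[B_0A_0]$ to $(f(B_0),-d(x,B_0))$ and $x\in[B_1A_1]$ to $(f(B_1),+d(x,B_1))$; the isometry is then a short case analysis. You instead give one global, non-inductive construction: a DFS ordering plus antipodal matching produces $\lceil k/2\rceil$ leaf pairs whose geodesics cover every edge, and the fractional weights $g_i(e)=1/c(e)$ together with gate projections make the coordinate differences telescope to $d(x,y)$. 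Your covering lemma is sound: the leaves cut off by an edge form a contiguous cyclic arc; a proper nonempty arc cannot be invariant under the shift by $k/2$ (its unique starting point would have to move); and in the odd case an arc of length at most $\lfloor k/2\rfloor$ contains at most one endpoint of each chord $\{\ell_i,\ell_{i+\lfloor k/2\rfloor}\}$, $0\le i\le\lfloor k/2\rfloor$, while those endpoints exhaust all leaves. The verification also holds up, since $[x,y]\cap P_i$ is exactly the subpath of $P_i$ between the gates $\pi_i(x),\pi_i(y)$, so the double sum over edges and spines has each edge of $[x,y]$ weighted by exactly $1$. Comparing the two: the paper's induction is shorter and needs no combinatorial lemma, though it is implicit (the embedding emerges from the recursion) and it silently requires choosing the two leaves so that the leaf count really drops by two --- deleting two leaves hanging on a common degree-3 vertex loses only one leaf --- a point your approach never has to worry about. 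Your construction is heavier up front but fully explicit, makes the role of the bound $2n$ (one coordinate per pair of leaves) transparent, and your observation that edge-disjoint path covers cannot suffice --- forcing overlapping spines and the $1/c(e)$ splitting --- isolates a genuine difficulty that the inductive proof sidesteps entirely.
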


\begin{proof}
We will prove this theorem by induction on $n$. If $n=1$, the statement is obviously true. Assume that we can embed isomerically any metric tree which has $2n$ leaves into $(\mathbb{R}^n,d_1)$. Let $X$ be any metric tree which has $2(n+1)$ leaves. We will show that $X$ can be embedded isometrically into $(\mathbb{R}^{n+1},d_1)$. We can choose two leaves in $X$ such that after deleting them with the adjacent edges (and discarding possibly emerging vertices with degree 2), the rest of the tree has $2n$ leaves (see Figure~\ref{af24}). Let us call these leaves as $A_0$ and $A_1$ and their adjacent edges as $B_0A_0$ and $B_1A_1$. According to our assumption, there is an isometric embedding $f$ from rest of the tree $Y=X-((B_0A_0]\cup(B_1A_1])$ to $(\mathbb{R}^n,d_1)$. Define the map $F:X\rightarrow (\mathbb{R}^{n+1},d_1)$,

 \begin{equation*}
 F(x)=\left\{\begin{array}{cll}
 (f(x),0) &when& x\in Y\\
 (f(B_0),-d(x,B_0)) &when& x\in[B_0A_0]\\
  (f(B_1),d(x,B_1)) &when& x\in[B_1A_1]\\
 \end{array}\right.
 \end{equation*}

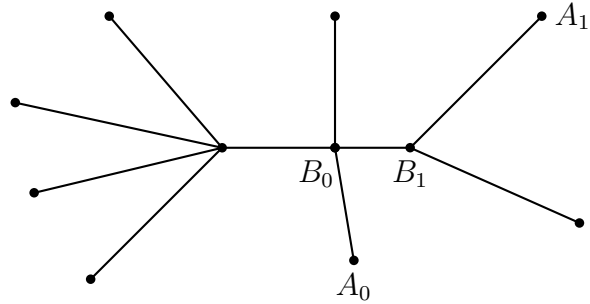
\begin{figure}[h!]
\begin{center}
\begin{pspicture*}(-4,-2.5)(4,2)
\psline(-2.5,1.75)(-1,0)(-3.75,0.6)
\psline(-2.75,-1.75)(-1,0)(-3.5,-0.6)
\psline(-1,0)(0.5,0)
\psline(0.5,1.75)(0.5,0)(0.75,-1.5)
\psline(0.5,0)(1.5,0)(3.25,1.75)
\psline(1.5,0)(3.75,-1)
\psdot(-1,0)
\uput[d](0.75,-1.5){$A_0$}
\psdot(0.5,0)
\uput[d](0.25,0){$B_0$}
\uput[d](1.5,0){$B_1$}
\uput[r](3.25,1.75){$A_1$}
\psdots(-2.5,1.75)(-3.75,0.6)(-2.75,-1.75)(-3.5,-0.6)(0.5,1.75)(0.75,-1.5)(0.5,0)(1.5,0)(3.25,1.75)(3.75,-1)
\end{pspicture*}
\caption{If we delete $(B_0A_0]$ and $(B_1A_1]$ and discarding the vertex $B_1$, we get a tree which has two fewer leaves.}
\label{af24}
\end{center}
\end{figure}

We will show that $F$ is an isometric embedding. Let $x,y\in X$ be two arbitrary  points. Since $f$ is an isometric embedding, If $x,y\in Y$, we get
\[
d_1(F(x),F(y))=d_1(f(x),f(y))=d(x,y).
\]

If $x\in (B_0A_0]$ and $y\in Y$,
\begin{eqnarray*}
  d_1(F(x),F(y)) &=& d_1(f(B_0),f(y))+d(x,B_0) \\
  &=& d(B_0,y)+d(x,B_0)\\
  &=&d(x,y).
\end{eqnarray*}

If $x\in (B_1A_1]$ and $y\in Y$,
\begin{eqnarray*}
  d_1(F(x),F(y)) &=& d_1(f(B_1),f(y))+d(x,B_1) \\
  &=& d(B_1,y)+d(x,B_1)\\
  &=&d(x,y).
\end{eqnarray*}

If $x\in (B_0A_0]$ and $y\in (B_1A_1]$,
\begin{eqnarray*}
  d_1(F(x),F(y)) &=& d_1(f(B_0),f(B_1))+d(x,B_0)+d(y,B_1) \\
  &=&d(x,B_0)+d(B_0,B_1)+d(B_1,y)\\
  &=&d(x,y).
\end{eqnarray*}

\end{proof}

\bibliographystyle{amsplain}

\begin{thebibliography}{99}
 \bibitem{Brid}
        M. Bridson and A. Haefliger, \emph{Metric spaces of nonpositive curvature},
        Grundlehren der Mathematischen Wissenschaften, vol. 319,
        Springer-Verlag, Berlin, 1999.
 \bibitem{Blum}
        L. M. Blumenthal, \emph{Theory and applications of distance geometry},
        Oxford University Press, London, 1953.

 \bibitem{dre} A. Dress, Trees, tight extensions of metric spaces, and
the cohomological dimension of certain groups: A note on
combinatorial properties of metric spaces, Advances in Mathematics
53 (1984), 321--402.

  \bibitem{Ev}
    S. Evans, {\em Probability and real trees}, Springer, Berlin, 2008.

   \bibitem{Hei}  J. Heinonen, Geometric embeddings of metric spaces, 2003.

 \bibitem{Mehmet} M. K{\i}l{\i}\c{c}, \emph{Isometry classes of planes in $( \mathbb{R}^3, d_{\infty})$}, Hacet. J. Math. Stat.
Volume 48 (4) (2019), 0-0
DOI : 10.15672/HJMS.2018.571

\bibitem{papa} A. Papadopoulos, \emph{Metric spaces, convexity and nonpositive curvature} (Second Edition), European Math. Soc., 2014.

\end{thebibliography}

\end{document}